\newtheorem{theorem}{Theorem}[section]
\newtheorem{lemma}[theorem]{Lemma}
\newtheorem{proposition}[theorem]{Proposition}
\newtheorem{corollary}[theorem]{Corollary}
\newtheorem{conjecture}[theorem]{Conjecture}
\newtheorem{remark}[theorem]{Remark}
\newcommand{\oldqed}{}
\def\endofClaim{\hfill\scalebox{.6}{$\Box$}}
\newcommand{\gammaj}{\gamma^{(j)}}
\newcommand{\sigmaj}{\sigma^{(j)}}
\newcommand{\NN}{\mathbb{N}}
\newcommand{\PP}{\mathbb{P}}
\newcommand{\RR}{\mathbb{R}}
\newcommand{\EE}{\mathbb{E}}
\newcommand{\ZZ}{\mathbb{Z}}
\newcommand{\eps}{\varepsilon}
\newcommand{\cV}{\mathcal{V}}
\newcommand{\supp}{\mathrm{supp}}
\newcommand{\Msym}{M^{\mathrm{sym}}_n}
\renewcommand{\Pr}[1]{\PP\left[#1\right]}
\newcommand{\Prob}[2]{\PP_{#1}\left[#2\right]}
\newcommand{\Exp}[1]{\EE\left[#1\right]}
\newcommand{\Expu}[2]{\EE_{#1}\left[#2\right]}
\newcommand{\Qnd}{Q_{n,d}}
\def\spn{\mathrm{span}}
\def\bv{\boldsymbol{v}}
\def\ba{\boldsymbol{a}}
\def\bq{\boldsymbol{q}}
\def\bone{\boldsymbol{1}}
\def\bzero{\boldsymbol{0}}
\title{On sparse random combinatorial matrices}
\author[E. Aigner-Horev]{Elad Aigner-Horev}
\author[Y. Person]{Yury Person}
\thanks{
 YP is supported by the Carl Zeiss Foundation and by DFG grant PE 2299/3-1.  
}
\address{Department of Mathematics and Computer Science, Ariel University, Ariel, Israel}
\email{horev@ariel.ac.il}
\address{Institut f\"ur Mathematik, Technische Universit\"at Ilmenau, 98684 Ilmenau, Germany}
\email{yury.person@tu-ilmenau.de}
\begin{document}
\onehalfspace

\shortdate
\yyyymmdddate
\settimeformat{ampmtime}
\date{\today, \currenttime}
\footskip=28pt
\allowdisplaybreaks

\begin{abstract}
Let $\Qnd$ denote the random {\sl combinatorial matrix} whose rows are independent of one another and such that each row is sampled uniformly at random from the subset of vectors in $\{0,1\}^n$ having precisely $d$ entries equal to $1$. We present a short proof of the fact that $\Pr{\det(\Qnd)=0} = O\left(\frac{n^{1/2}\log^{3/2} n}{d}\right)=o(1)$, whenever $d=\omega(n^{1/2}\log^{3/2} n)$. In particular, our proof accommodates {\sl sparse} random combinatorial matrices in the sense that $d = o(n)$ is allowed. 

We also consider the singularity of deterministic integer matrices $A$ randomly perturbed by a sparse combinatorial matrix. In particular, we prove that $\Pr{\det(A+\Qnd)=0}=O\left(\frac{n^{1/2}\log^{3/2} n}{d}\right)$, again, whenever $d=\omega(n^{1/2}\log^{3/2} n)$ and $A$ has the property that $(\bone,-d)$ is not an eigenpair of $A$.  
\end{abstract}

\maketitle

\section{Introduction}
\label{sec:intro}
For an integer  $1 \leq d \leq n$, we write $Q_{n,d}$ to denote 
a random $n\times n$ matrix over $\{0,1\}$ whose rows are sampled independently and uniformly at random from the set 
$
\mathcal{S}_d := \Big\{\bv \in \{0,1\}^n: \sum_{i=1}^n \bv_i = d\Big\}
$.
The matrix $Q_{n,d}$ is often referred to as a {\em combinatorial matrix}. 
Such a matrix is said to be {\em sparse} if $d = o(n)$. 

The random matrix $Q_n := Q_{n,n/2}$ ($n$ even) has attracted much attention of late. 
Nguyen~\cite{Nguyen13} first considered its {\sl singularity probability} by showing that $\Pr{\det(Q_n)=0}=O(n^{-C})$ for any $C>0$. Subsequently, Ferber, Jain, Luh and Samotij~\cite{FJLS19} improved the singularity probability to be exponential and of the form $2^{-n^{0.1}}$. Jain~\cite{Jain2019a} extended the latter bound to the study of $s(Q_n)$; the least singular value of $Q_n$. Recently,  
Tran~\cite{Tran20} established, the asymptotically optimal bound $\Pr{s_n(Q_n)\le \frac{\eps}{\sqrt{n}}}\le C\eps+2^{-cn}$ for all $\eps\ge 0$ and some absolute constants $c$, $C>0$. In particular this implies $\PP[\det (Q_n)=0]\le 2^{-cn}$.

All of the aforementioned results extend to $d=\Omega(n)$. It is natural to ask for which further $d$, the random matrix $Q_{n,d}$ remains nonsingular {\sl asymptotically almost surely} (a.a.s.\ , hereafter)\footnote{That is, $\Pr{\det(Q_{n,d})=0} \to 0$ as $n \to \infty$.}. It is easy to see that, for  $d\le (1-\eps)\log n$, the matrix $\Qnd$ a.a.s.\ contains a zero column. We conjecture as follows.  

\begin{conjecture}\label{conj:AHP}
For every $\eps>0$ and $d\ge (1+\eps)\log n$, we have
\[
\Pr{\det(\Qnd)=0}=o(1).
\]
\end{conjecture}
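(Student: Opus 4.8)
To prove Conjecture~\ref{conj:AHP}, note that $Q_{n,d}$ is singular iff it has a nonzero right null vector, and since the entries are integers we may take a \emph{primitive} null vector $\bx\in\ZZ^n$ (coordinates with greatest common divisor $1$). The plan is to bound $\Pr{\exists\,\bx\in\ZZ^n\setminus\{\bzero\}: Q_{n,d}\bx=\bzero}$ by partitioning the candidate null directions according to their arithmetic ``spread''. Two facts pin down the threshold and should guide the split: first, $Q_{n,d}\bone=d\bone$, so the all-ones direction is never a null vector; second, the unique purely local obstruction is a zero column, and a first-moment computation shows that these vanish a.a.s.\ exactly when $d\ge(1+\eps)\log n$. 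The task is therefore to show that no other column dependency survives above this threshold.

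For a fixed candidate $\bx$, each row is the indicator $\bv$ of a uniform random $d$-subset $S\subseteq[n]$, so a row is orthogonal to $\bx$ with probability $\rho(\bx):=\Pr{\sum_{i\in S}\bx_i=0}$, and by independence $\Pr{Q_{n,d}\bx=\bzero}=\rho(\bx)^n$. I would then sum this over primitive $\bx$, grouped by the size of $\rho(\bx)$. The two ingredients are a Littlewood--Offord anti-concentration bound for sampling \emph{without replacement}, giving $\rho(\bx)=O(1/\sqrt d)$ for arithmetically spread $\bx$, and an \emph{inverse} Littlewood--Offord statement bounding the number $N_k$ of primitive $\bx$ (within the relevant box, whose side length is controlled via Cramer's rule and Hadamard's inequality) with $\rho(\bx)\ge 2^{-k}$. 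If $N_k$ grows slowly enough in $2^k$, then $\sum_k N_k\,2^{-kn}\to 0$, ruling out all spread null directions at once and avoiding a continuous $\eps$-net, which would be far too lossy in this sparse regime.

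The complementary, arithmetically \emph{structured} directions---above all those supported on, or concentrated near, a small coordinate set---escape the counting bound and must be excluded combinatorially. Here I would prove directly that a.a.s.\ no small collection of columns is linearly dependent, which kills $\bx=e_i-e_j$ (coincident columns) and its higher analogues. These are first-moment estimates on the underlying random bipartite graph in which each left vertex chooses $d$ right-neighbours, and it is precisely here that the hypothesis $d\ge(1+\eps)\log n$ is spent, in parallel with the disappearance of zero columns. One expects the expansion of this graph above the connectivity threshold to force every surviving null vector to be genuinely spread, handing control back to the previous paragraph.

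The hard part will be the sparse regime itself. With only $d=\Theta(\log n)$ samples per row the anti-concentration gain is a feeble $O(1/\sqrt{\log n})$, so there is essentially no slack: the inverse Littlewood--Offord input must be sharp \emph{uniformly} and in the awkward fixed-size (hypergeometric) sampling model rather than the usual independent-coordinate model, most plausibly by transferring from the independent-inclusion model conditioned on the row sum. Simultaneously, the borderline sparse dependencies---near-coincident pairs and triples of columns, and their analogues on coordinate sets of size up to $o(n)$---have only polynomially small probabilities, so eliminating this entire hierarchy with merely the multiplicative $(1+\eps)$ room in $d$ is delicate, and is where the threshold is genuinely tight. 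Finally, the fixed-row-sum dependence must be reconciled with both arguments; I expect a switching or leave-one-coordinate-out conditioning to absorb it, but making the anti-concentration, the counting bound, and the graph expansion all coexist with that conditioning down to $d=(1+\eps)\log n$ is the real difficulty, and is presumably why the gap to the paper's $d=\omega(n^{1/2}\log^{3/2}n)$ result remains open.
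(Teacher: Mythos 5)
You have set out to prove a statement that the paper itself presents only as Conjecture~\ref{conj:AHP}: the paper contains no proof of it, and its actual theorems reach only down to $d=\omega(n^{1/2}\log^{3/2}n)$. Your text is likewise not a proof but a research plan, and you concede as much in your final sentence; every load-bearing step is deferred. The most concrete failure point is your first-moment scheme over primitive integer vectors. Via Cramer/Hadamard the candidate null vectors live in a box of side roughly $d^{n/2}$, i.e.\ there are about $d^{n^2/2}$ of them, while the anti-concentration input you propose gives at best $\rho(\bx)^n=O(d^{-n/2})$ per vector; the deficit is super-exponential, so everything hinges on the inverse Littlewood--Offord counting bound $N_k$ that you invoke but do not supply. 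No such counting theorem is known for hypergeometric (fixed row-sum) rows at $d=\Theta(\log n)$; the counting theorem of Ferber--Jain--Luh--Samotij that the paper cites operates at $d=\Theta(n)$, and the paper's own argument deliberately \emph{avoids} counting theorems altogether by reducing modulo a prime $p=\Theta\bigl(d/(n^{1/2}\log^{3/2}n)\bigr)$, so that the first moment of $|\ker_{\ZZ_p}(\Qnd)|$ runs over only $p^n$ vectors (Lemma~\ref{lem:cV} plus Corollary~\ref{cor:LO-light}). That device is exactly what pins the paper to its threshold: the Hal\'asz-type estimate needs $d^2/(p^2 n\log^2 n)\gtrsim\log n$, and when $d=O(n^{1/2}\log^{3/2}n)$ the prime degenerates to $p=O(1)$, so neither the paper's route nor your sketch gets anywhere near $d=(1+\eps)\log n$.

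The second unproved pillar is the structured regime. Your claim that a.a.s.\ no small set of columns is linearly dependent, with only the multiplicative $(1+\eps)$ slack in $d$, is asserted rather than established, and it is not a routine first-moment computation: near the connectivity threshold the relevant events (coincident or near-coincident column tuples, dependencies on supports of size up to $o(n)$) have only polynomially small probabilities, and summing the whole hierarchy is precisely where the conjectured threshold is tight. So the honest assessment is: your outline identifies plausible ingredients (without-replacement anti-concentration, an inverse-theorem-based vector count, expansion of the column hypergraph), but both ingredients that would distinguish it from the paper's approach are themselves open problems, and the parts that are \emph{not} open are essentially the paper's Proposition~\ref{prop:LO-light} and Lemma~\ref{lem:cV} in different clothing. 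As a proof of Conjecture~\ref{conj:AHP} it has a genuine gap at both pillars; as a statement of why the conjecture is hard, it is accurate.
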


In this note, we provide a short proof of the following result which holds for a significantly wider range of $d$ from that seen in the aforementioned results. Let $\omega(t)$ denote any function $f(t)$ for which $f(t)/t$ tends to infinity arbitrarily slowly as $t\rightarrow \infty$.

\begin{theorem}\label{thm:Q_n-a} For $d=\omega(n^{1/2}\log^{3/2} n)$ we have
\[
\PP[\det(\Qnd)=0] = O\left(\frac{n^{1/2}\log^{3/2} n}{d}\right)=o(1).
\]
\end{theorem}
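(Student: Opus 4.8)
The plan is to expose the rows $R_1,\dots,R_n$ of $\Qnd$ one at a time and to bound the probability that the last row falls into the span of the preceding ones, after first isolating the one unavoidable source of degeneracy. Since every row lies in $\cS_d$ we have $\Qnd\bone=d\bone$, so $\bone$ is always a right eigenvector; dually, any vector $\ba$ normal to $\spn(R_1,\dots,R_{n-1})$ satisfies $\langle\ba,R_i\rangle=0$ for $i<n$, and since $\langle\bone,R_i\rangle=d\neq0$ such an $\ba$ can never be a multiple of $\bone$. Using $\langle\bone,R_n\rangle=d$ we may replace $\ba$ by $\ba-\bar a\,\bone$, with $\bar a$ the mean of its entries, without changing the event $R_n\perp\ba$; thus it suffices to study the anticoncentration of $\langle\ba,R_n\rangle$ for a nonzero, mean-zero $\ba$. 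Concretely I would record the decomposition
\[
\Pr{\det(\Qnd)=0}\le\Pr{\mathrm{rk}(R_1,\dots,R_{n-1})\le n-2}+\sup_{H}\Pr{R_n\in H},
\]
where the supremum ranges over the hyperplanes $H$ arising as $\spn$ of $n-1$ independent rows.

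The second term is an anticoncentration estimate for a single uniform $d$-subset, and I would prove it through a Littlewood--Offord argument built on independent coordinate swaps. Picking $\approx n/2$ disjoint coordinate pairs on which $\ba$ takes distinct values, one reveals $R_n$ by first fixing, for each pair, how many of its two coordinates lie in the support of $R_n$, and only afterwards---independently across the pairs whose intersection with the support has size exactly one---deciding which of the two coordinates is chosen. Conditioned on the counts, $\langle\ba,R_n\rangle$ becomes a constant plus a sum of independent signed terms, one per split pair, so the Erd\H{o}s bound yields $\sup_t\Pr{\langle\ba,R_n\rangle=t}=O(|L|^{-1/2})$, where $L$ is the set of split pairs. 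Since $\Exp{|L|}\asymp d(n-d)/n$ and $|L|$ concentrates, this gives a bound of order $d^{-1/2}$ whenever $\ba$ is sufficiently spread.

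The crux---and the step I expect to be the main obstacle---is that the swap estimate degrades exactly when $\ba$ is \emph{structured}, i.e.\ when almost all of its entries coincide; the extreme case $\ba=\boldsymbol e_i-\boldsymbol e_j$ gives $\langle\ba,R_n\rangle\in\{-1,0,1\}$ with $\Pr{\langle\ba,R_n\rangle=0}\to1$, so no bound uniform over mean-zero $\ba$ is available. I would therefore run an inverse Littlewood--Offord dichotomy: if the normal $\ba$ to the first $n-1$ rows has at least $k$ entries differing from its modal value, the swap estimate applies and contributes $O(d^{-1/2})$; otherwise $\ba=\mu\bone+\ba'$ with $\ba'$ supported on fewer than $k$ coordinates, and $\langle\ba,R_i\rangle=0$ for all $i<n$ forces the restriction of each of the $n-1$ rows to that small coordinate set to realize the single value $-\mu d$ of $\langle\ba',\cdot\rangle$. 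Bounding the probability that such a low-complexity constraint (or outright rank deficiency, handled by the same counting) can be met by $n-1$ independent uniform $d$-subsets is precisely where the factor $n^{1/2}\log^{3/2}n/d$ should emerge, the logarithms coming from a union bound over admissible supports and modal values and the $n^{1/2}/d$ from the per-configuration probability.

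Finally I would assemble the pieces: the spread case contributes $O(d^{-1/2})$, which is $o(n^{1/2}\log^{3/2}n/d)$ in the stated range, while the structured and rank-deficient cases contribute the dominant $O(n^{1/2}\log^{3/2}n/d)$; together these give the theorem. The argument is robust enough that replacing $\Qnd$ by $A+\Qnd$ only shifts the centering constant, which accounts for the hypothesis that $(\bone,-d)$ not be an eigenpair of $A$.
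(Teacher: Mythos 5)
Your overall architecture (row exposure, a swap-based Littlewood--Offord bound for spread normals, separate treatment of almost-constant normals) is sensible, and your spread case is sound: the pair-splitting coupling does make the endpoint choices independent conditionally on the split pattern, and the Erd\H{o}s bound gives $\sup_t\Pr{\langle\ba,R_n\rangle=t}=O(|L|^{-1/2})$ with $|L|$ concentrated around $\Theta(d(n-d)/n)$. But there is a genuine gap at exactly the step you yourself flag as the main obstacle: the structured case, as you set it up over $\RR$ (equivalently, over $\ZZ$). The family of normals $\ba=\mu\bone+\ba'$ with $|\supp(\ba')|<k$ is uncountable, and the natural discretization (take $\ba$ to be a cofactor vector of $n-1$ rows, so integer entries of size up to $n^{n/2}$ by Hadamard) leaves on the order of $\binom{n}{k}\,n^{nk/2}$ candidates, while the best available per-vector probability is only $(1-cd/n)^{n-1}\approx e^{-cd}$: the example $\ba'=\boldsymbol e_1-\boldsymbol e_2$, $\mu=0$ shows a single row satisfies the constraint with probability $1-\Theta(d/n)$, so $e^{-cd}$ is essentially sharp. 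Since $d\le n$, the union bound you invoke ``over admissible supports and modal values'' cannot close in this form; this is precisely where one needs either the counting/inverse Littlewood--Offord machinery of Ferber--Jain--Luh--Samotij, or --- as the paper does --- a reduction modulo a deliberately \emph{small} prime $p=\Theta\bigl(d/(n^{1/2}\log^{3/2}n)\bigr)$, under which the structured vectors number at most $\binom{n}{d/(10\log n)}\,p^{d/(10\log n)+1}$, a count that $e^{-d/2}$ does beat.

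Relatedly, your sketch contains no mechanism that would actually produce the stated bound. The factor $n^{1/2}\log^{3/2}n/d$ does not come from per-configuration probabilities (those are exponentially small); in the paper it is exactly $1/p$, arising from Markov's inequality applied to $K=|\ker_{\ZZ_p}(\Qnd)|$: singularity mod $p$ forces $K\ge p$, and a first-moment computation --- the Hal\'asz-type estimate $\Pr{\Qnd\bv=\bzero}=(1+o(1))p^{-n}$ for $\bv\in\cV$, plus the union bound over the few structured vectors --- gives $\EE\,K=2+o(1)$, hence $\Pr{K\ge p}\le (2+o(1))/p$. This also disposes of your term $\Pr{\mathrm{rk}(R_1,\dots,R_{n-1})\le n-2}$, which you wave away as ``handled by the same counting'' but which is essentially the original problem again and would have to be iterated over all exposure steps. (A small further slip: replacing $\ba$ by $\ba-\bar a\,\bone$ changes the event $\langle\ba,R_n\rangle=0$ into $\langle\ba-\bar a\,\bone,R_n\rangle=-\bar a d$, so the centering shifts; harmless for anticoncentration, but not ``without changing the event.'') With the mod-$p$ reduction and the kernel first moment added, your two-case analysis essentially becomes the paper's proof; without them, the structured case and the final bound remain unproven.
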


The probability bounds stated in Theorem~\ref{thm:Q_n-a} are probably far from optimal (in view of the aforementioned works~\cite{FJLS19,Jain2019a,Tran20}). 
Our proof is inspired by the recent exceedingly short proof by Ferber~\cite{Ferber20} that $\Pr{\det(\Msym)=0}=O\left(\frac{\log^C n}{n^{1/2}}\right)$, where matrix $\Msym$ denotes the random symmetric $\pm1$-matrix; the latter being one of the most studied models of random discrete matrices with dependencies among the entries. Our proof is also influenced by the arguments of Ferber, Jain, Luh and Samotij~\cite{FJLS19}; in our account, though, we avoid appealing to the so called {\sl counting theorem} associated with the inverse Littlewood-Offord problem established in~\cite{FJLS19}; this in turn renders our argument somewhat shorter and extendable to $d=\omega(n^{1/2}\log^{3/2} n)$.

We also address the nonsingularity of deterministic matrices with integer entries perturbed by a sparse random combinatorial matrix. 

\begin{theorem}\label{thm:Q_n-b}
Let $A$ be an $n\times n$ matrix with integer entries 
such that $(\bone, -d)$ is not an eigenpair of $A$, where $\bone$ denotes the all-ones vector.
Then $d=\omega(n^{1/2}\log^{3/2} n)$ we have 
\[
\PP[\det(A+\Qnd)=0]=O\left(\frac{n^{1/2}\log^{3/2} n}{d}\right)=o(1).
\]
\end{theorem}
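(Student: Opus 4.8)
The plan is to re-run the proof of Theorem~\ref{thm:Q_n-a} with each row $q_i$ of $\Qnd$ (i.i.d.\ uniform on $\cS_d$) replaced by the shifted row $R_i := A_i + q_i$ of $M := A + \Qnd$, tracking the only two places where the deterministic matrix $A$ enters. The single rigid feature of the combinatorial model is that $\langle \bone, q_i\rangle = d$ for every $i$, i.e.\ $\Qnd\,\bone = d\,\bone$; consequently $M\bone = A\bone + d\,\bone$, which is nonzero \emph{precisely} because $(\bone,-d)$ is not an eigenpair of $A$. The hypothesis is therefore exactly what neutralises the one deterministic degeneracy of the model, and I expect the remainder of the argument to transfer.

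Following the reduction used for Theorem~\ref{thm:Q_n-a}, the singularity probability is controlled by an anti-concentration estimate for $\langle q_n, \bw\rangle$, where $\bw$ is a normal vector to the span of the remaining rows: conditioning on those rows fixes $\bw$ while $q_n$ remains uniform on $\cS_d$, and $M$ is singular only if $\langle q_n, \bw\rangle = -\langle A_n, \bw\rangle$. The crucial observation is that the anti-concentration bound behind Theorem~\ref{thm:Q_n-a} controls $\sup_t \Pr{\langle q_n, \bw\rangle = t}$ \emph{uniformly in the target} $t$; hence replacing the target $0$ by the fixed real $-\langle A_n, \bw\rangle$ is free, and $A$ contributes nothing to the bulk of the estimate.

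The estimate degenerates only when $\bw$ is proportional to $\bone$, since then $\langle q_n, \bw\rangle$ is almost surely constant; this near-constant regime is where the hypothesis is spent. The case $\bw = \bone$ is excluded outright, as it forces $\langle \bone, R_i\rangle = (A\bone)_i + d = 0$ for every exposed row $R_i$: if some exposed row violates this then $\bw \neq \bone$, while if the lone violating row is the unexposed one, say row $n$ with $(A\bone)_n \neq -d$, then $\langle q_n, \bone\rangle = d \neq -(A\bone)_n = -\langle A_n, \bone\rangle$ and $M$ is nonsingular. For $\bw$ merely close to $\bone$, and more generally for compressible $\bx$, I would reuse the net-plus-anti-concentration argument of Theorem~\ref{thm:Q_n-a}, the point being that adding the fixed integer matrix $A$ only shifts the relevant coordinatewise distributions and hence preserves their spread.

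I expect the main obstacle to be exactly this last transfer: one must verify that every union bound over a net and every lower bound on $\|\trans{\bx} M\|$ for structured $\bx$ is insensitive to the deterministic shift, depending on the randomness only through the shift-invariant anti-concentration of $\langle q_i, \cdot\rangle$. Granting this, the structured directions contribute $o(n^{1/2}\log^{3/2}n/d)$ and the unstructured directions contribute the claimed $O(n^{1/2}\log^{3/2}n/d)$, exactly as in Theorem~\ref{thm:Q_n-a}.
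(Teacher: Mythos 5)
Your two conceptual observations are exactly the ones the paper relies on, but the argument you propose to transfer is not the paper's proof of Theorem~\ref{thm:Q_n-a}, and as described the transfer has a genuine gap. Theorem~\ref{thm:Q_n-a} is proved here by reducing mod a prime $p=\Theta\bigl(d/(n^{1/2}\log^{3/2}n)\bigr)$ and running a first-moment computation on $K=|\ker_{\ZZ_p}(\Qnd)|$: Lemma~\ref{lem:cV} (which already allows an \emph{arbitrary} target $\ba$) handles $\bv\in\cV$, Corollary~\ref{cor:LO-light} handles the almost-constant vectors, and the rate $O(n^{1/2}\log^{3/2}n/d)$ comes from Markov via $\PP[K\ge p]\le (2+o(1))/p$, i.e.\ from the fact that mod-$p$ kernel vectors come in dilation classes of size $p$. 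There is no conditioning on $n-1$ rows, no normal vector $\bw$, no net over compressible vectors, and no lower bound on $\|\trans{\bx}M\|$ anywhere in the paper; so your plan to ``reuse the net-plus-anti-concentration argument of Theorem~\ref{thm:Q_n-a}'' invokes machinery that does not exist and would have to be built from scratch. Over $\RR$ that would require an inverse Littlewood--Offord/counting input for rows uniform on $\mathcal{S}_d$ strong enough to survive a union bound over a net --- precisely the heavy tool from~\cite{FJLS19} that the authors advertise avoiding --- and your sketch also leaves the standard difficulties of the row-exposure scheme unaddressed (the first $n-1$ rows may already be dependent; $\bw$ is not unique). Nothing in the proposal explains why the final probability would come out as $O(n^{1/2}\log^{3/2}n/d)$ rather than whatever the net trade-off happens to yield, so the quantitative claim is unsubstantiated.

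What you did get right, and what makes the actual proof essentially free: both anti-concentration inputs are uniform in the target, exactly as you intuited --- Lemma~\ref{lem:cV} is proved for every $\ba\in\ZZ_p^n$, and Proposition~\ref{prop:LO-light} is stated from the outset for $(A+\Qnd)\bv=0$ with arbitrary $A$, because the bound~\eqref{eq:=b} on $\PP[\bq^{\intercal}\bv=b]$ holds for every $b\in\ZZ_p$ --- and the eigenpair hypothesis is indeed spent only on the direction $\bone$, via $(A+\Qnd)\bone=A\bone+d\bone\ne 0$. One subtlety your real-variable framing hides: over $\ZZ_p$ \emph{every} nonzero multiple $\alpha\bone$ is degenerate ($\bq^{\intercal}(\alpha\bone)=\alpha d$ almost surely), and unlike in Theorem~\ref{thm:Q_n-a}, where choosing $p\nmid d$ eliminates these through Corollary~\ref{cor:LO-light}, here $\alpha d$ may well coincide with the required targets mod $p$; the paper therefore only concludes that with probability $1-O(n^{1/2}\log^{3/2}n/d)$ one has $\ker_{\ZZ_p}(A_p+\Qnd)\subseteq\{0\}\cup\spn(\bone)$, and then rules out $\bone$ at the real level using the hypothesis on $A$. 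The correct completion of your idea is thus much simpler than your plan: repeat the mod-$p$ first-moment proof of Theorem~\ref{thm:Q_n-a} verbatim with Proposition~\ref{prop:LO-light} in place of Corollary~\ref{cor:LO-light}, and invoke the eigenpair condition only at the very end, over $\RR$.
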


Similar perturbations problems by other models of random matrices and for a wider class of deterministic matrices were extensively studied in the literature as well. We refer the reader to the works of 
Sankar, Spielman, and Teng~\cite{SST06}, Tao and Vu~\cite{TV08,TV10}, Jain~\cite{Jain2019b}, Livshyts, Tikhomirov, and Vershynin~\cite{LTV19}, and Jain, Sah, and Sawhney~\cite{JSS20a}. For an excellent account regarding {\sl combinatorial random matrix theory} in general, see the recent survey by Vu~\cite{Vu20}. 

\subsection{Proof overview} The proof of Theorems~\ref{thm:Q_n-a} and~\ref{thm:Q_n-b} proceeds by reducing the problem to the finite field setting and working over $\ZZ_p$. It then turns out to be sufficient to consider the following two cases:  almost constant vectors; i.e.\ those that have the same entries in all but $d/(10\log n)$ positions (in particular vectors with small support), and vectors which are not almost constant. 

The former case can be dealt with rather straightforward union bound, while the latter case is studied through a Hal\'asz-type argument~\cite{Halasz77} for the Littlewood-Offord problem. Then, a first moment argument yields that with probability $o(1)$ the kernel of the matrix $\Qnd$ (over $\ZZ_p$) is less than $p$ and hence the matrix is nonsingular.

\section{Invertibility with respect to a single vector}

For $n \in \mathbb{N}$ and a prime $p$, write $\mathbb{Z}^{n\times n}$ to denote the set of $n \times n$-matrices over the integers; write $\mathbb{Z}^{n \times n}_p$ to denote those matrices whose entries are taken from $\mathbb{Z}_p$. Given $A \in \mathbb{Z}^{n \times n}$, let $A_p$ denote the matrix obtained from $A$ by reducing the elements of $A$ modulo $p$. For a vector $\bv \in \ZZ^n_p$, write $\supp(\bv) : = \{i\colon \bv_i\not\equiv 0 \pmod p\}$ to denote the {\em support} of $\bv$.
 We start with the following simple observation allowing us to carry out our arguments modulo a prime $p$. 

\begin{proposition}\label{prop:reduce_Zp}
Let $p$ be prime and let $A\in\ZZ^{n\times n}$. Then,
\[
\PP[\det(A+\Qnd)=0]\le \PP[\det(A_p+\Qnd)\equiv 0 \pmod p].
\]
\end{proposition}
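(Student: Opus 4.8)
The plan is to exploit the fact that reduction modulo $p$ is a ring homomorphism and therefore commutes with the determinant. First I would observe that since each row of $\Qnd$ lies in $\{0,1\}^n$, the matrix $\Qnd$ has integer entries; as $A\in\ZZ^{n\times n}$ as well, the sum $A+\Qnd$ is an integer matrix and its determinant $\det(A+\Qnd)$ is an honest integer, computed over $\ZZ$.

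Next I would invoke the standard fact that for any $M\in\ZZ^{n\times n}$ one has $\det(M_p)\equiv\det(M)\pmod p$. This is immediate from the Leibniz expansion, which exhibits $\det(M)$ as a polynomial with integer coefficients in the entries of $M$, together with the fact that $x\mapsto x\bmod p$ is a ring homomorphism $\ZZ\to\ZZ_p$ (primality of $p$ is not even needed here). Applying this with $M=A+\Qnd$, and using that $(A+\Qnd)_p=A_p+\Qnd$ since the entries of $\Qnd$, being $0$ or $1$, are unaffected by reduction, I would obtain the pointwise congruence $\det(A_p+\Qnd)\equiv\det(A+\Qnd)\pmod p$, valid for every realization of $\Qnd$.

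From here the inequality is purely a matter of event containment. Fix any realization of $\Qnd$ for which $\det(A+\Qnd)=0$ over $\ZZ$. Then $\det(A+\Qnd)\equiv 0\pmod p$, and by the congruence above $\det(A_p+\Qnd)\equiv 0\pmod p$. Hence, as events on the common probability space governing $\Qnd$,
\[
\{\det(A+\Qnd)=0\}\ \subseteq\ \{\det(A_p+\Qnd)\equiv 0 \pmod p\},
\]
and monotonicity of probability yields the claimed bound.

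I do not expect any genuine obstacle in this argument; it is essentially bookkeeping. The only point warranting a moment of care is recognizing that both sides compare probabilities of \emph{the same} random matrix $\Qnd$ under two different readings of its determinant (over $\ZZ$ versus over $\ZZ_p$), so that the passage from the left event to the right event is a deterministic inclusion rather than a comparison of two separate experiments. Once this is made explicit, the inequality follows with no loss, and the utility of the proposition is that it frees the remainder of the paper to work entirely over the field $\ZZ_p$.
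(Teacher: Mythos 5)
Your proof is correct and follows the same route as the paper: the paper's one-line argument is exactly the event inclusion $\{\det(A+\Qnd)=0\}\subseteq\{\det(A_p+\Qnd)\equiv 0 \pmod p\}$, which you spell out via the fact that reduction modulo $p$ is a ring homomorphism commuting with the determinant. You have simply made explicit the bookkeeping the paper leaves implicit.
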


\begin{proof}
If $\det(A+\Qnd)=0$ holds, then $\det(A+\Qnd)\equiv 0 \pmod p$, and hence $\det(A_p+\Qnd) \equiv 0 \pmod p$.
\end{proof}

Let $\cV$ denote the set of vectors $\bv\in\ZZ_p^n$ satisfying $|\{i\colon \bv_i=b\}|\le n-d/(10\log n)$ for every $b\in\ZZ_p$, i.e.\ the {\sl level} sets of each entry are not too big. The following lemma, which is a counterpart of~\cite[Lemma~2.2]{Ferber20},  asserts that for $\bv \in \cV$, the random vector $\Qnd \bv$ is essentially uniformly distributed. 
It is in this lemma that we incur a lower bound on $d$. 

\begin{lemma}\label{lem:cV}
Let  $d=\omega(n^{1/2}\log^{3/2} n)$
and let $p=\Theta\left(\frac{d}{n^{1/2}\log^{3/2} n}\right)$ be a prime.  Let $\bv\in \cV$ and $\ba\in\ZZ_p^n$. Then,
\[
\Pr{\Qnd \bv=\ba}=\frac{1+o(1)}{p^n}.
\]
\end{lemma}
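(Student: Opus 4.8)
The plan is to analyze the distribution of $\Qnd\bv$ via its Fourier transform over $\ZZ_p^n$. Since the rows of $\Qnd$ are independent and identically distributed, the random vector $\Qnd\bv \in \ZZ_p^n$ has independent coordinates, each equal in distribution to $\langle \br, \bv\rangle \bmod p$ where $\br$ is a single row sampled uniformly from $\cS_d$. Thus it suffices to show that each coordinate is $(1+o(1))/p$-close to uniform on $\ZZ_p$, and then take an $n$-fold product; the $o(1)$ errors must be controlled so that their product over $n$ coordinates is still $1+o(1)$, which forces a per-coordinate error of size $o(1/n)$. The standard tool is the identity $\Pr{\langle \br,\bv\rangle = a} = \frac{1}{p}\sum_{t=0}^{p-1} \omega^{-ta}\,\Expu{}{\omega^{t\langle \br,\bv\rangle}}$ with $\omega = e^{2\pi i/p}$, so the task reduces to bounding the nontrivial characteristic-function values $\bigl|\Expu{}{\omega^{t\langle \br,\bv\rangle}}\bigr|$ for $t \not\equiv 0$.

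First I would fix a single uniform row $\br$ from $\cS_d$ and estimate $\phi_\bv(t) := \Expu{}{\exp(2\pi i t \langle \br,\bv\rangle/p)}$ for each $t \in \{1,\dots,p-1\}$. The combinatorial difficulty is that $\br$ is not a vector of independent entries: it is a uniform $d$-subset of $[n]$, so $\langle \br,\bv\rangle = \sum_{i \in S}\bv_i$ where $S$ is a uniform $d$-element subset. I would compare this sampling-without-replacement model to the independent (with-replacement) model, where the corresponding character average factors as $\prod_{i=1}^n$ of a single-entry contribution; the product structure then yields a clean bound in terms of how spread out the values $\{t\bv_i \bmod p\}$ are. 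The hypothesis $\bv \in \cV$ guarantees no value $b \in \ZZ_p$ is attained in more than $n - d/(10\log n)$ coordinates, so at least $d/(10\log n)$ coordinates differ from any fixed majority value; this is exactly the spread needed to make each factor strictly less than $1$ in modulus and to drive $|\phi_\bv(t)|$ down. I expect the product bound to give something like $|\phi_\bv(t)| \le \exp\bigl(-c\,d/(p^2\log n)\bigr)$ uniformly in $t \neq 0$, using that a single entry's character value has modulus at most $1 - \Omega(1/p^2)$ when it is nonzero mod $p$.

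The hard part will be transferring the estimate from the tractable independent model to the actual uniform-$d$-subset model, and then verifying that the resulting per-coordinate error is genuinely $o(1/n)$ after choosing $p = \Theta\bigl(d/(n^{1/2}\log^{3/2}n)\bigr)$. For the first issue I would either use a direct hypergeometric-versus-binomial comparison or, more cleanly, exploit negative association / a coupling so that the without-replacement character sum is dominated by the with-replacement one; alternatively one can condition on the support size and argue directly. For the second issue, plugging in the prescribed $p$ gives $d/(p^2\log n) = \Theta\bigl(n \log^2 n / d\bigr) \cdot \tfrac{1}{\log n}$-type quantities; since $d = \omega(n^{1/2}\log^{3/2}n)$ one checks that $\exp(-c\,d/(p^2\log n))$ is $n^{-\omega(1)}$, so that $\sum_{t=1}^{p-1}|\phi_\bv(t)|$ is $o(1/n)$ with room to spare. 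Summing the Fourier expansion, each coordinate satisfies $\Pr{\langle\br,\bv\rangle = a_i} = \tfrac{1}{p}\bigl(1 + o(1/n)\bigr)$ uniformly, so
\[
\Pr{\Qnd\bv = \ba} = \prod_{i=1}^n \Pr{\langle \br^{(i)},\bv\rangle = \ba_i} = \frac{1}{p^n}\bigl(1+o(1/n)\bigr)^n = \frac{1+o(1)}{p^n},
\]
which is the claim.

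One subtlety I would flag is that the character sum $\langle\br,\bv\rangle$ depends on $\bv$ only through the multiset of values $\{\bv_i \bmod p\}$, so the $\cV$-condition should be read modulo $p$; the level-set bound in $\cV$ is stated over $\ZZ_p$, which is exactly what the argument needs, and I would make sure the spread estimate uses this modular reading throughout.
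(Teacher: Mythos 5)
Your high-level strategy---characters over $\ZZ_p$, a bound on the nontrivial Fourier coefficients of a single row's inner product with $\bv$, then a product over the $n$ independent rows---is exactly the paper's, and your bookkeeping at both ends is fine: a per-row relative error of $o(1/n)$ (equivalently, the paper's sum over $\xi\in\ZZ_p^n\setminus\{\bzero\}$, which contributes $\binom{n}{s}p^s$ terms at support size $s$) does yield $\frac{1+o(1)}{p^n}$, and even a merely polynomial bound $|\phi_\bv(t)|\le n^{-C}$ for a large constant $C$ would suffice since $p\le n$. The genuine gap is the step you yourself flag as ``the hard part'': transferring the character estimate from the independent model to the uniform $d$-subset model. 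None of your three options is carried out, and the one you call cleanest is unsound: negative association yields inequalities for products of \emph{monotone real} functions of the coordinates, and gives no domination of $\bigl|\EE\exp\bigl(2\pi i t\sum_{i\in S}\bv_i/p\bigr)\bigr|$ by its iid counterpart---complex characters are neither monotone nor real, so no such comparison follows. The conditioning route (Bernoulli$(d/n)$ entries conditioned on row sum $d$) can be made to work, but conditioning destroys precisely the product factorization your sketch relies on; one must introduce an auxiliary frequency for the constraint $\sum_i\eta_i=d$, pay a factor $\Pr{\Bin(n,d/n)=d}^{-1}=\Theta(\sqrt{d})$, and then split according to whether that frequency aligns with the phase of the majority level set, in which case only the $\ge d/(10\log n)$ minority coordinates supply decay. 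That is a substantive two-phase Hal\'asz analysis, not a remark. A smaller but telling slip: in the iid model a Bernoulli$(d/n)$ entry with nonzero phase has character modulus $1-\Omega\bigl(\tfrac{d}{n}\cdot\tfrac{1}{p^2}\bigr)$, not $1-\Omega(1/p^2)$ (factors $1-\Omega(1/p^2)$ arise only from fair coins); the honest exponent is $\Omega\bigl(d^2/(np^2\log n)\bigr)=\Omega(\log^2 n)$ with the prescribed $p$, which still suffices, but your stated bound $\exp(-c\,d/(p^2\log n))$ overshoots what the model delivers.

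For comparison, the paper resolves the dependence by an exact coupling rather than a comparison: a uniform $T\in\binom{[n]}{d}$ is represented as $T_{\gamma,\sigma}$ with $\sigma$ a uniform permutation and $\gamma\in\{0,1\}^d$ iid fair bits, so that conditioned on $\sigma$ the row sum becomes $\sum_{\ell=1}^{d}\gamma_\ell\bigl(\bv_{\sigma(\ell)}-\bv_{\sigma(\ell+d)}\bigr)$ plus a constant---a genuinely independent sum in which each unequal pair contributes a factor $\bigl|\cos\bigl(\pi\xi_j(\bv_{\sigma(\ell)}-\bv_{\sigma(\ell+d)})/p\bigr)\bigr|\le e^{-2/p^2}$, and this is where the fair-coin $1/p^2$ decay legitimately enters. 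The remaining randomness in $\sigma$ is then handled by a submartingale inequality showing that, among the first $d/(20\log n)$ pairs, at least $\Omega\bigl(d^2/(n\log^2 n)\bigr)$ are unequal with probability $1-\exp\bigl(-\Omega(d^2/(n\log^2 n))\bigr)$, giving the per-row bound $2e^{-C\log n}$---note a concentration step with no analogue in your sketch, where the iid product is deterministic. So the skeleton of your proposal matches the paper, but the coupling (or, alternatively, the worked-out conditioned-model analysis) is the missing idea on which the lemma actually turns.
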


\begin{proof}
We write $e_p(x):=\exp\left(\frac{2\pi ix}{p}\right)$. Then,  
$$
\Pr{\Qnd \bv=\ba}=\Exp{\delta_0(\Qnd \bv-\ba)}=\prod_{j=1}^n \Exp{\delta_0((\Qnd \bv-\ba)_j)},
$$ 
where here $\delta_0(x) = 1$ if $x=0$ and zero otherwise. 
Writing $\Qnd(j)$ to denote the $j$-th row of $\Qnd$, we reach 
\begin{align}
\Pr{\Qnd \bv=\ba}&=\prod_{j=1}^n\Expu{\Qnd(j)}{\delta_0(\Qnd(j)^{\intercal}\bv-\ba_j)} \nonumber \\ 
&=\prod_{j=1}^n\Expu{\Qnd(j)}{\Expu{\xi_j\sim\ZZ_p}{e_p(\xi_j(\Qnd(j)^{\intercal}
\bv-\ba_j))}} \nonumber \\
& =\prod_{j=1}^n\Expu{\Qnd(j)}{\frac{1}{p}\sum_{\xi_j\in\ZZ_p}{e_p(\xi_j(\Qnd(j)^{\intercal}\bv-\ba_j))}} \nonumber \\
&=\prod_{j=1}^n\left(\frac{1}{p}+\Expu{\Qnd(j)}{\frac{1}{p}\sum_{\xi_j\in\ZZ_p\setminus\{0\}}{e_p(\xi_j(\Qnd(j)^{\intercal}\bv-\ba_j))}}\right) \nonumber \\
& =\prod_{j=1}^n\left(\frac{1}{p}+ \frac{1}{p}\sum_{\xi_j\in\ZZ_p\setminus\{0\}}\Expu{\Qnd(j)}{e_p(\xi_j(\Qnd(j)^{\intercal}\bv-\ba_j))}\right) \nonumber\\
& =\frac{1}{p^n}+\frac{1}{p^n}\sum_{\xi\in\ZZ_p^n\setminus\{0\}}\left({\prod_{j=1}^n \Expu{\Qnd(j)}{e_p(\xi_j(\Qnd(j)^{\intercal}\bv-\ba_j))}}\right).\label{eq:prob-est}
\end{align}

We pursue an estimate for $\left|\sum_{\xi\in\ZZ_p^n\setminus\{0\}}\left({\prod_{j=1}^n\Expu{\Qnd(j)}{e_p(\xi_j(\Qnd(j)^{\intercal}\bv-\ba_j))}}\right)\right|$. 

\begin{align}
\left|\sum_{\xi\in\ZZ_p^n\setminus\{0\}}\left({\prod_{j=1}^n\Expu{\Qnd(j)}{e_p(\xi_j(\Qnd(j)^{\intercal}\bv-\ba_j))}}\right)\right| \nonumber 
& \le \sum_{\xi\in\ZZ_p^n\setminus\{0\}}\prod_{j=1}^n \left|\Expu{\Qnd(j)}{e_p(\xi_j(\Qnd(j)^{\intercal}\bv-\ba_j))}\right| \nonumber \\
& =\sum_{\xi\in\ZZ_p^n\setminus\{0\}}\prod_{j=1}^n \left|\Expu{\Qnd(j)}{e_p(\xi_j \Qnd(j)^{\intercal}\bv)}\right|\nonumber \\
&=\sum_{\xi\in\ZZ_p^n\setminus\{0\}}\prod_{j\in\supp(\xi)} \left|\Expu{\Qnd(j)}{e_p(\xi_j \Qnd(j)^{\intercal}\bv)}\right| \nonumber \\
&=\sum_{\xi\in\ZZ_p^n\setminus\{0\}}\prod_{j\in\supp(\xi)} \left|\Expu{\Qnd(j)}{e_p\left(\xi_j\sum_{\ell\in\supp(\bv)} \Qnd(j,\ell)\bv_{\ell}\right)}\right|.\label{eq:error-est-a}
\end{align}

The set $T_j:=\{\ell\colon Q_n(j,\ell)=1\}$ is a $d$-element set sampled uniformly from $\binom{[n]}{d}$ of $[n]$. An alternative generation of $T_j$ reads as follows. For any fixed $\gamma\in\{0,1\}^{d}$ and any permutation $\sigma\in S_n$ of $[n]$ sampled uniformly at random from $S_n$, set
$$
T_{\gamma,\sigma}:=\bigcup_{i\in[d]}\Big(\{\sigma(i)\colon \gamma_i=1\}\cup\{\sigma(i+d)\colon \gamma_i=0\}\Big).
$$
Then, for every set $T\in\binom{[n]}{d}$, the equality $\PP[T_{\gamma,\sigma} = T] = \frac{d!(n-d)!}{n!} = \binom{n}{d}^{-1}$ holds. Indeed, the number of ways to map the members of $T$ to the set of positions $\cup_{i\in[d]}\left(\{i\colon \gamma_i=1\}\cup\{i+d\colon \gamma_i=0\}\right)$ is $d!$. There are $(n-d)!$ ways to complete the mapping from $[n]$ to the remaining set of positions. Any permutation of $[n]$ thus formed gives rise to $T$ through $T_{\gamma,\sigma}$. 
It follows that for any fixed $\gamma \in \{0,1\}^d$, the set $T_{\gamma,\sigma}$ is uniformly distributed in $\binom{[n]}{d}$; this remains so if $\gamma\sim\{0,1\}^d$ itself is a sequence of uniformly distributed Bernoulli variables. Such a coupling (for $d=n/2$) is used e.g.\ in the proof of~\cite[Proposition~4.10]{LLTTJY17} and also in~\cite{FJLS19}.

In what follows, we view each set $T_j$ as being generated by a random permutation $\sigma^{(j)}$ and a uniformly random sequence $\gamma^{(j)}\in\{0,1\}^d$. 
We continue with~\eqref{eq:error-est-a}.

\begin{align}
\sum_{\xi\in\ZZ_p^n\setminus\{0\}} & \prod_{j\in\supp(\xi)} \left|\Expu{Q_n(j)}{e_p\left(\xi_j\sum_{\ell\in\supp(\bv)} Q_n(j,\ell)\bv_{\ell}\right)}\right|  = \sum_{\xi\in\ZZ_p^n\setminus\{0\}}\prod_{j\in\supp(\xi)} \left|\Expu{T_j}{e_p\left(\xi_j\sum_{\ell\in\supp(\bv)\cap T_j}\bv_{\ell}\right)}\right| \nonumber \\
& =\sum_{\xi\in\ZZ_p^n\setminus\{0\}}\prod_{j\in\supp(\xi)} 
\left|\Expu{\sigmaj, \gammaj}{e_p\left(\xi_j\left(\sum_{\ell=1}^{d} \gammaj_\ell \bv_{\sigmaj(\ell)}+\sum_{s=1}^{d} (1-\gammaj_s)\bv_{\sigmaj(s+d)}\right)\right)}\right| \nonumber \\
& =\sum_{\xi\in\ZZ_p^n\setminus\{0\}}\prod_{j\in\supp(\xi)} \left|\Expu{\sigmaj}{\Expu{\gammaj}{e_p\left(\xi_j\left(\sum_{\ell=1}^{d} \gammaj_\ell (\bv_{\sigmaj(\ell)}-\bv_{\sigmaj(\ell+d)})+\sum_{s=1}^{d} \bv_{\sigmaj(s+d)}\right)\right)}}\right|\nonumber \\
& =\sum_{\xi\in\ZZ_p^n\setminus\{0\}}\prod_{j\in\supp(\xi)} \left|\Expu{\sigmaj}{e_p\left(\xi\sum_{s=1}^{d} \bv_{\sigmaj(s+d)}\right)\Expu{\gammaj}{e_p\left(\xi_j\sum_{\ell=1}^{d} \gammaj_\ell (\bv_{\sigmaj(\ell)}-\bv_{\sigmaj(\ell+d)})\right)}}\right| \nonumber \\
& =\sum_{\xi\in\ZZ_p^n\setminus\{0\}}\prod_{j\in\supp(\xi)} \left|\Expu{\sigmaj}{e_p\left(\xi_j\sum_{s=1}^{d} \bv_{\sigmaj(s+d)}\right)\prod_{\ell=1}^{d}\Expu{\gammaj}{e_p\left(\xi_j \gammaj_\ell (\bv_{\sigmaj(\ell)}-\bv_{\sigmaj(\ell+d)})\right)}}\right| \nonumber \\
& =\sum_{\xi\in\ZZ_p^n\setminus\{0\}}\prod_{j\in\supp(\xi)} \left|\Expu{\sigmaj}{e_p\left(\xi_j\sum_{s=1}^{d} \bv_{\sigmaj(s+d)}\right)\prod_{\ell=1}^{d}\left(\frac{1+e_p(\xi_j(\bv_{\sigmaj(\ell)}-\bv_{\sigmaj(\ell+d)}))}{2}\right)}\right| \nonumber \\
&\le \sum_{\xi\in\ZZ_p^n\setminus\{0\}}\prod_{j\in\supp(\xi)} \Expu{\sigmaj}{\prod_{\ell=1}^{d}\left|\frac{1+e_p(\xi_j(\bv_{\sigmaj(\ell)}-\bv_{\sigmaj(\ell+d)}))}{2}\right|}.\label{eq:error-est-b}
\end{align}

Since 
$$
\left|\frac{1+e_p(\xi_j(\bv_{\sigmaj(\ell)}-\bv_{\sigmaj(\ell+d)}))}{2}\right|=|\cos\left(\pi \xi_j(\bv_{\sigmaj(\ell)}-\bv_{\sigmaj(\ell+d)})/p\right)|,
$$
then owing to the inequality $|\cos (\pi m/p)|  \leq e^{-2/p^2}$ that holds whenever $m \in \ZZ_p \setminus \{0\}$, it follows that if $\bv_{\sigmaj(\ell)}-\bv_{\sigmaj(\ell+d)}\neq 0$, then we have the following estimate (as $\xi_j\neq 0$),
\[
|\cos\left(\pi \xi_j(\bv_{\sigmaj(\ell)}-\bv_{\sigmaj(\ell+d)})/p\right)|\le e^{-2/p^2}.
\]

Returning to~\eqref{eq:error-est-b}, we may now write  

\begin{align}
\sum_{\xi\in\ZZ_p^n\setminus\{0\}}\prod_{j\in\supp(\xi)} & \Expu{\sigmaj}{\prod_{\ell=1}^{d}\left|\frac{1+e_p(\xi_j(\bv_{\sigmaj(\ell)}-\bv_{\sigmaj(\ell+d)}))}{2}\right|} \nonumber \\
& \le \sum_{\xi\in\ZZ_p^n\setminus\{0\}}\prod_{j\in\supp(\xi)} \Expu{\sigmaj}{\prod_{\ell\colon \bv_{\sigmaj(\ell)}\neq \bv_{\sigmaj(\ell+d)}}e^{-2/p^2}}. \label{eq:exp-cos}
\end{align}

By assumption $\bv\in\cV$. Hence, the number of (ordered) pairs $(s,t)$ such that $\bv_s-\bv_t\neq 0$ is at least 
$$
\frac{d}{10\log n} \left(n-\frac{d}{10\log n}\right)>\frac{dn}{11\log n}.
$$ 
Let $X_j$ denote the number of pairs $(\sigmaj(\ell),\sigmaj(\ell+d))$ satisfying $\bv_{\sigmaj(\ell)} - \bv_{\sigmaj(\ell+d)} \neq 0$ for $\ell
\in[d/(20\log n)]$. Then, $\Exp{X_j} \geq d^2/(220 n\log^2 n)$. 
We can view value of $X_j$ evolving as pairs $(\sigmaj(\ell),\sigmaj(\ell+d))$ are revealed and 
the probability that $\bv_{\sigmaj(\ell)} - \bv_{\sigmaj(\ell+d)} \neq 0$ (conditioned on the first $\ell-1$ pairs) for any $
\ell\le d/(20\log n )$ is at least $\Omega(d/(n\log n))$. It follows by a standard submartingale inequality (see e.g.~\cite[Lemma~2.2]{ABHKP14b}) that 
\[
\Pr{X_j< \EE\,X_j/2}\le \exp\left(-\EE\,X_j/12\right).
\]
Thus, with probability at most $\exp\left(-\frac{d^2}{12\cdot 220\cdot n\log^2 n}\right)$ we have $X_j\le \EE\, X_j/2$ so that 
\begin{align}
\Expu{\sigmaj}{\prod_{\ell\colon \bv_{\sigmaj(\ell)}\neq \bv_{\sigmaj(\ell+d)}}e^{-2/p^2}}\nonumber 
& = \Expu{\sigmaj}{e^{-2X_j/p^2}} \nonumber \\
&\le \exp\left(-\frac{d^2}{2^{12}\cdot n\log^2 n}\right)+\exp\left(-\frac{d^2}{2^{12}\cdot p^2\cdot n\log^2 n }\right)\nonumber \\
& \le 2\exp\left(- C\log n\right),\label{eq:expect-est}
\end{align}
where here the third inequality is owing to $d=\omega(n^{1/2}\log^{3/2} n)$ and $p=\Theta\left(\frac{d}{n^{1/2}\log^{3/2} n}\right)$.

We conclude with the following estimation for $\Pr{\Qnd\cdot \bv=\ba}$ appearing in~\eqref{eq:prob-est} using~\eqref{eq:error-est-a},~\eqref{eq:error-est-b},~\eqref{eq:exp-cos} and~\eqref{eq:expect-est}.

\begin{align}
\left\mid\Pr{\Qnd \bv= \ba}-\frac{1}{p^n}\right\mid & \overset{\eqref{eq:prob-est}}{\le}\frac{1}{p^n} \left|\sum_{\xi\in\ZZ_p^n\setminus\{0\}}\left({\prod_{j=1}^n\Expu{Q_n(j)}{e_p(\xi_j(Q_n(j)^{\intercal}\bv-\ba_j))}}\right)\right| \nonumber \\
& \overset{\eqref{eq:error-est-a}}{\le} \frac{1}{p^n}\sum_{\xi\in\ZZ_p^n\setminus\{0\}}\prod_{j\in\supp(\xi)} \left|\Expu{Q_n(j)}{e_p\left(\xi_j\sum_{\ell\in\supp(\bv)} Q_n(j,\ell)\bv_{\ell}\right)}\right| \nonumber \\
& \overset{\eqref{eq:error-est-b}}{\le} \frac{1}{p^n}\sum_{\xi\in\ZZ_p^n\setminus\{0\}}\prod_{j\in\supp(\xi)} \Expu{\sigmaj}{\prod_{\ell=1}^{d}\left|\frac{1+e_p(\xi_j(\bv_{\sigma(\ell)}-\bv_{\sigma(\ell+d)}))}{2}\right|} \nonumber \\
& \overset{\eqref{eq:exp-cos}}{\le} \frac{1}{p^n}\sum_{\xi\in\ZZ_p^n\setminus\{0\}}\prod_{j\in\supp(\xi)} \Expu{\sigmaj}{\prod_{\ell\colon \bv_{\sigmaj(\ell)}\neq \bv_{\sigmaj(\ell+d)}}e^{-2/p^2}} \nonumber \\
& \overset{\eqref{eq:expect-est}}{\le} \frac{1}{p^n}\sum_{\xi\in\ZZ_p^n\setminus\{0\}}\prod_{j\in\supp(\xi)} 2\exp\left(-C\log n\right) \nonumber \\
& \overset{\phantom{\eqref{eq:expect-est}}}{=}
\frac{1}{p^n}\sum_{\xi\in\ZZ_p^n\setminus\{0\}} 2^{|\supp(\xi)|}\exp\left(-C|\supp(\xi)|\log n\right).\label{eq:final-sum}
\end{align}

For the final sum appearing in~\eqref{eq:final-sum} we observe that
\[
\frac{1}{p^n}\sum_{\xi\in\ZZ_p^n\setminus\{0\}} 2^{|\supp(\xi)|}\exp\left(-C|\supp(\xi)|\log n\right)= \frac{1}{p^n}
\sum_{s=1}^{n}\binom{n}{s}p^s2^s\exp\left(-Cs\log n\right)=\frac{o(1)}{p^n}.
\]
It follows that
\[
\left\mid\Pr{\Qnd\cdot \bv=\ba}-\frac{1}{p^n}\right\mid \le\frac{o(1)}{p^n}
\]
concluding the proof.
\end{proof}

Let $\bone_{\supp(\bv)} \in \ZZ^n_p$ denote the vector attained from $\bv$ by setting the $i$th entry to $1$ for every $i \in \supp(\bv)$ and zero otherwise. 
For a set $S \subseteq [n]$, we write $\bone_{S}$ to denote the {\em characteristic} vector of $S$. Finally, set $\spn(\bone) := \{\alpha \cdot \bone: \alpha \in \ZZ_p\}$ and 
$
\bone_{\supp(v)} \cdot \ZZ_p := \{\alpha \cdot \bone_{\supp(\bv)}: \alpha \in \ZZ_p\}
$.

Proposition~\ref{prop:LO-light}, stated next, is our counterpart of~\cite[Observation~2.1]{Ferber20}; the lack of independence between entries in our setting renders our argument somewhat more involved. 
We provide two proofs of Proposition~\ref{prop:LO-light}. The lengthier one that is more combinatorial in spirit, so to speak, we postpone until the Appendix below. 

\begin{proposition}\label{prop:LO-light} Let $p$ be a prime. 
Let $A\in\ZZ_p^{n\times n}$ and $\bv\in\ZZ_p^n\setminus \spn(\bone)$. Then, there exists  a constant $c'>0$ such that
\[
\PP[(A+\Qnd)\bv=0]\le e^{-c'd}
\]
holds for all $d\in [1,n/2]\cap \NN$. 
\end{proposition}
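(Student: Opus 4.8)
The plan is to fix the deterministic data and exploit the row-independence of $\Qnd$, thereby reducing the statement to a one-dimensional anti-concentration estimate. Since $A$ and $\bv$ are fixed, the event $(A+\Qnd)\bv=\bzero$ coincides with $\Qnd\bv=\ba$, where $\ba:=-A\bv\in\ZZ_p^n$ is a fixed vector. Because the rows of $\Qnd$ are independent,
\[
\Pr{(A+\Qnd)\bv=\bzero}=\prod_{j=1}^n \Pr{\Qnd(j)^{\intercal}\bv=\ba_j}=\prod_{j=1}^n\Pr{\sum_{\ell\in T_j}\bv_\ell=\ba_j},
\]
where $T_j=\{\ell\colon\Qnd(j,\ell)=1\}$ is uniformly distributed in $\binom{[n]}{d}$. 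It therefore suffices to bound, for a single uniformly random $d$-set $T$, the concentration probability $\rho(\bv):=\max_{b\in\ZZ_p}\Pr{\sum_{\ell\in T}\bv_\ell=b}$ and then raise the bound to the $n$-th power.

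The heart of the argument is the claim that $\rho(\bv)\le 1-\tfrac{d}{2n}$ whenever $\bv\notin\spn(\bone)$, which I would establish by a swapping argument. Since $\bv$ is not constant modulo $p$, there exist coordinates $i\neq j$ with $\bv_i\neq\bv_j$ in $\ZZ_p$. Let $E$ be the event that $T$ contains exactly one of $i,j$. Conditioning on the trace $T\setminus\{i,j\}=S$, the two completions $S\cup\{i\}$ and $S\cup\{j\}$ are equally likely, and the associated sums differ by $\bv_i-\bv_j\neq 0$; hence, conditionally on $E$, the sum $\sum_{\ell\in T}\bv_\ell$ takes two distinct values each with probability $\tfrac12$, so $\Pr{\sum_{\ell\in T}\bv_\ell=b\mid E}\le\tfrac12$ for \emph{every} $b$. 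A direct count gives $\Pr{E}=\tfrac{2d(n-d)}{n(n-1)}\ge\tfrac{d}{n}$ for $d\le n/2$, and splitting on $E$ yields, for every fixed $b$,
\[
\Pr{\sum_{\ell\in T}\bv_\ell=b}\le \tfrac12\Pr{E}+\Pr{E^c}=1-\tfrac12\Pr{E}\le 1-\frac{d}{2n},
\]
which is the claim.

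Combining the two displays gives $\Pr{(A+\Qnd)\bv=\bzero}\le\bigl(1-\tfrac{d}{2n}\bigr)^n\le e^{-d/2}$, so the proposition holds with $c'=\tfrac12$. The only place where the hypothesis $\bv\notin\spn(\bone)$ is used is in producing the pair $i\neq j$ with $\bv_i\neq\bv_j$; without it the sum could be deterministic and no such bound is possible. I expect the main subtlety to be the uniformity of the estimate over the target value $b$: the swapping argument bounds $\Pr{\sum_{\ell\in T}\bv_\ell=b\mid E}$ by $\tfrac12$ simultaneously for all $b$, and it is precisely this uniformity that lets us control the a priori unknown coordinates $\ba_j$ of $\ba=-A\bv$ row by row, with no further information about $A$ required.
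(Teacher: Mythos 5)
Your proof is correct, and the key step is genuinely different from both of the paper's arguments. You share the same outer reduction (row independence gives $\Pr{(A+\Qnd)\bv=\bzero}=\prod_j\Pr{\Qnd(j)^{\intercal}\bv=\ba_j}$, so a uniform-in-$b$ single-row bound $1-c'd/n$ suffices), but the paper proves the single-row estimate~\eqref{eq:=b} in the main text via the permutation--Rademacher coupling $T_{\gamma,\sigma}$ together with a Fourier identity over $\ZZ_p$, rewriting the point probability as $\Expu{\sigma}{\Prob{\gamma}{\sum_{\ell}\gamma_\ell(v_{\sigma(\ell)}-v_{\sigma(\ell+d)})=2b-\sum_s v_{\sigma(s)}}}$, bounding the inner probability by $1/2$ when $\sigma$ produces a discordant pair, and invoking an argument in the spirit of~\cite[Lemma~5.1]{FJLS19} to show a discordant pair occurs with probability $\Omega(d/n)$; the appendix gives a second, case-analytic proof through binomial-coefficient comparisons. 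Your swapping argument replaces all of this with a short exchangeability observation: fixing one discordant pair $i\neq j$ with $\bv_i\neq\bv_j$ (which exists exactly because $\bv\notin\spn(\bone)$), conditioning on the event $E$ that $T$ hits exactly one of $i,j$ and on the trace $T\setminus\{i,j\}$ makes the two completions equally likely with distinct sums, so the conditional concentration is at most $1/2$ uniformly in $b$, and $\Pr{E}=\frac{2d(n-d)}{n(n-1)}\ge\frac{d}{n}$ for $d\le n/2$ gives $1-\frac{d}{2n}$ and hence $c'=1/2$, matching the constant noted in the paper's remark. Your route is more elementary and self-contained (no characteristic functions, no coupling, no external citation for the $\Omega(d/n)$ step); what the paper's heavier machinery buys is reuse, since the same coupling-plus-Fourier framework is what powers Lemma~\ref{lem:cV}, where one must exploit \emph{many} discordant pairs to get the near-uniformity $\frac{1+o(1)}{p^n}$ --- a statement your single-pair swap cannot reach, though nothing stronger is needed for this proposition.
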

\begin{proof}
Let $\bq$ be a vector sampled uniformly at random from $\mathcal{S}_d$. We show that for \emph{any} $b\in \ZZ_p$, 
\begin{equation}\label{eq:=b}
\PP[\bq^{\intercal}\bv=b]\le 1-c'\frac{d}{n}
\end{equation}
 for some absolute constant $c'>0$. Assuming~\eqref{eq:=b} and utilising the independence of the rows of $\Qnd$, we may write 
$$
\PP[(A+\Qnd)\bv=0]=\PP[\Qnd \bv= -Av]=\prod_{i=1}^n \PP[( \Qnd \bv)_i=(-A\bv)_i]\overset{\eqref{eq:=b}}{\le} \left(1-c'\frac{d}{n}\right)^n\le e^{-c'd};
$$
establishing the claim.  
It remains to prove~\eqref{eq:=b}.

Employing a similar coupling to that seen just below~\eqref{eq:error-est-a}, observe that a set $T\in\binom{[n]}{d}$, chosen uniformly at random, can be generated using a pair consisting of a permutation of $[n]$, namely $\sigma$, chosen uniformly at random, and a sequence $\gamma\sim\{\pm 1\}^d$ of independent {\sl Rademacher} random variables, by setting    
\[
T_{\gamma,\sigma}:=\bigcup_{i\in[d]}\Big(\{\sigma(i)\colon (1+\gamma_i)/2=1\}\cup\{\sigma(i+d)\colon (1-\gamma_i)/2=1\}\Big).
\]
As before, the latter distributes uniformly over $\binom{[n]}{d}$. 

We may now write
\begin{align}
\Pr{\bq^{\intercal}\bv=b} & = \Expu{\bq}{\delta_0(\bq^{\intercal}\bv-b)} \nonumber\\
&=\Expu{\bq}{\Expu{\xi\sim\ZZ_p}{e_p(\xi(\bq^{\intercal}\bv-b))}} \nonumber \\
& =\Expu{T}{\Expu{\xi\sim\ZZ_p}{e_p(\xi(\sum_{\ell\in T} \bq_\ell \bv_\ell-b))}} \nonumber \\
& =\Expu{\sigma,\gamma}{\Expu{\xi\sim\ZZ_p}{e_p\left(\xi\left(\sum_{s=1}^d \frac{1+\gamma_s}{2}v_{\sigma(s)}+\sum_{\ell=1}^d \frac{1-\gamma_\ell}{2}v_{\sigma(\ell+d)}-b\right)\right)}}.
\label{eq:single-row-est}
\end{align}

Rearranging the terms in $e_p(\cdot)$ in the last line of~\eqref{eq:single-row-est} yields
\begin{align}
\mathbb{E}_{\sigma,\gamma}\bigg[&\Expu{\xi\sim\ZZ_p}{e_p\left(\xi\left(\sum_{s=1}^d \frac{1+\gamma_s}{2}v_{\sigma(s)}+\sum_{\ell=1}^d \frac{1-\gamma_\ell}{2}v_{\sigma(\ell+d)}-b\right)\right)}\bigg]\nonumber\\
&=\Expu{\sigma,\gamma}{\Expu{\xi\sim\ZZ_p}{e_p\left(\xi\left(\sum_{\ell=1}^d \frac{\gamma_\ell}{2}(v_{\sigma(\ell)}-v_{\sigma(\ell+d)})+\sum_{s=1}^{2d} \frac{v_{\sigma(s)}}{2}-b\right)\right)}}\nonumber\\
&=\Expu{\sigma,\gamma}{\Expu{\xi\sim\ZZ_p}{e_p\left(\xi\left(\sum_{\ell=1}^d \gamma_\ell(v_{\sigma(\ell)}-v_{\sigma(\ell+d)})+\sum_{s=1}^{2d} v_{\sigma(s)}-2b\right)\right)}}\nonumber\\
&=\EE_\sigma\Expu{\gamma}{\delta_0\left(\sum_{\ell=1}^d \gamma_\ell(v_{\sigma(\ell)}-v_{\sigma(\ell+d)})+\sum_{s=1}^{2d} v_{\sigma(s)}-2b\right)}\nonumber\\
&=\Expu{\sigma}{\Prob{\gamma}{\sum_{\ell=1}^d \gamma_\ell(v_{\sigma(\ell)}-v_{\sigma(\ell+d)})=2b-\sum_{s=1}^{2d} v_{\sigma(s)}}}.
\label{eq:single-row-est-b}
\end{align}
Now, since $v\not\in\spn(\bone)$, observe that for a fixed permutation $\sigma$ with at least one pair $(v_{\sigma(\ell)},v_{\sigma(\ell+d)})$ such that $v_{\sigma(\ell)}\neq v_{\sigma(\ell+d)}$ for some $\ell\in[d]$, we have
\[
\Prob{\gamma}{\sum_{\ell=1}^d \gamma_\ell(v_{\sigma(\ell)}-v_{\sigma(\ell+d)})=2b-\sum_{s=1}^{2d} v_{\sigma(s)}}\le 1/2,
\]
whereas for other permutations $\sigma$ we take the trivial upper bound 
\[
\Prob{\gamma}{\sum_{\ell=1}^d \gamma_\ell(v_{\sigma(\ell)}-v_{\sigma(\ell+d)})=2b-\sum_{s=1}^{2d} v_{\sigma(s)}}\le 1.
\]
The probability that the random permutation $\sigma$ satisfies $v_{\sigma(\ell)}\neq v_{\sigma(\ell+d)}$ for some $\ell\in[d]$ is 
at least $\frac{c d}{n}$ for some absolute constant $c>0$ (this can be proved using an argument similar to that seen in~\cite[Lemma~5.1]{FJLS19}). Hence,
\begin{equation}\label{eq:sparse-last}
\Pr{\bq^{\intercal}\bv=b}\le \frac{cd}{n}\cdot \frac{1}{2}+\left(1-\frac{cd}{n}\right)=1-\frac{cd}{2n},
\end{equation}
holds which establishes~\eqref{eq:=b} and thus completing the proof.
\end{proof}

\begin{remark}
The constant $c'$ in Proposition~\ref{prop:LO-light} can be taken to be $1/2$. 
It is also possible to provide an estimate $\Pr{\bq^{\intercal}\bv=b}\le 1-\frac{d}{2n}$ directly by checking various cases. This is more clearly seen in our alternative proof of Proposition~\ref{prop:LO-light} presented in the Appendix. 
\end{remark}

\begin{corollary}\label{cor:LO-light} Let $p$ be a prime and let $d\in[1,n/2]$ be an integer such that $p\nmid d$.
Let $\bv\in\ZZ_p^n\setminus \{0\}$. Then, 
\[
\PP[\Qnd \bv=0]\le e^{-d/2}.
\]
\end{corollary}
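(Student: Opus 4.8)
The plan is to split the set $\ZZ_p^n\setminus\{0\}$ of candidate kernel vectors into the constant vectors $\spn(\bone)\setminus\{0\}$ and the non-constant vectors $\ZZ_p^n\setminus\spn(\bone)$, and to treat each part separately. The non-constant part is handled by the already-established Proposition~\ref{prop:LO-light}, while the constant part falls outside its hypothesis and must be dispatched by a direct computation that crucially exploits the assumption $p\nmid d$.

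First, for $\bv\in\ZZ_p^n\setminus\spn(\bone)$, I would simply apply Proposition~\ref{prop:LO-light} with the zero matrix $A=0$. Invoking the accompanying Remark, which permits the constant to be taken as $c'=1/2$, this gives at once
\[
\Pr{\Qnd\bv=0}\le e^{-d/2},
\]
so the desired bound holds for every non-constant $\bv$ with no further work.

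Second, for $\bv\in\spn(\bone)\setminus\{0\}$, write $\bv=\alpha\bone$ with $\alpha\in\ZZ_p\setminus\{0\}$. Since every row of $\Qnd$ contains exactly $d$ ones, we have $\Qnd\bone=d\bone$, and therefore $\Qnd\bv=\alpha d\,\bone$ over $\ZZ_p$. Because $p$ is prime with $p\nmid d$ and $\alpha\not\equiv 0\pmod p$, the product $\alpha d$ is a nonzero element of $\ZZ_p$, whence $\Qnd\bv=\alpha d\,\bone\neq 0$ with certainty. Consequently $\Pr{\Qnd\bv=0}=0\le e^{-d/2}$ in this case as well, and combining the two cases yields the claimed bound for all $\bv\in\ZZ_p^n\setminus\{0\}$.

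The only delicate point—and the sole place where genuine care is needed—is the second case: the constant vectors lie in $\spn(\bone)$ and are thus explicitly excluded from Proposition~\ref{prop:LO-light}, so they cannot be absorbed into that estimate and must be checked by hand. It is precisely the arithmetic hypothesis $p\nmid d$ that saves the argument, by forcing $\alpha d\not\equiv 0\pmod p$; were this condition dropped, a nonzero constant vector could genuinely belong to the kernel of $\Qnd$ over $\ZZ_p$ and the corollary would fail. Beyond this observation there is no analytic obstacle, as all the probabilistic content has already been carried out in Proposition~\ref{prop:LO-light}.
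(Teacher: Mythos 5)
Your proof is correct and follows essentially the same route as the paper: handle $\bv\notin\spn(\bone)$ via Proposition~\ref{prop:LO-light} (with $c'=1/2$ from the Remark), and rule out nonzero constant vectors by the identity $\Qnd\bone=d\bone$ together with $p\nmid d$. Your direct computation $\Qnd(\alpha\bone)=\alpha d\,\bone\neq 0$ is just the unfolded form of the paper's contradiction argument, so there is nothing to add.
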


\begin{proof}
If $\bv \notin \spn(\bone)$, then the claim follows by Proposition~\ref{prop:LO-light}. Suppose then that $\bv \in \spn(\bone)$ and that it lies in the kernel of $\Qnd$. Then, $\bone$ lies in the kernel of $\Qnd$. Note, however, that 
$\Qnd\cdot \bone =d\cdot \bone$ and, since $p\nmid d$, it follows that $\Qnd\cdot \bone \neq 0$; a contradiction.   
\end{proof}

\section{Proofs of Theorems~\ref{thm:Q_n-a} and~\ref{thm:Q_n-b}}
\begin{proof}[Proof of Theorem~\ref{thm:Q_n-a}]
By Proposition~\ref{prop:reduce_Zp}, it suffices to prove the result over $\ZZ_p$ for a prime $p=\Theta\left(\frac{d}{n^{1/2}\log^{3/2} n}\right)$ such that $p\nmid d$. 
Similarly to~\cite{Ferber20}, define $K:=|\ker_{\ZZ_p}(\Qnd)|$. Then, 
\[
\EE\,K=\sum_{\bv\in\cV}\Pr{\Qnd\cdot \bv=0}+\sum_{\bv\not\in\cV}\Pr{\Qnd\cdot \bv=0}\le \frac{1+o(1)}{p^n} p^n+1+\binom{n}{d/(10\log n)}p^{d/(10\log n)+1}e^{-d/2}=2+o(1),
\]
where here we apply Corollary~\ref{cor:LO-light} to vectors $\bv\not\in\cV\cup\{\bzero\}$ and Lemma~\ref{lem:cV} to vectors $\bv\in\cV$.
As
\[
\Pr{\det(\Qnd)=0}\le \Pr{K\ge p}\le (2+o(1))/p,
\]
the statement follows.
\end{proof}

\begin{proof}[Proof of Theorem~\ref{thm:Q_n-b}]
Our proof of Theorem~\ref{thm:Q_n-b} is exactly as that seen for Theorem~\ref{thm:Q_n-a}, with the exception that Proposition~\ref{prop:LO-light} is used instead of Corollary~\ref{cor:LO-light}. In this manner,  we can  conclude that, with probability at most $O\left(\frac{n^{1/2}\log^{3/2} n}{d}\right)$, the kernel $\ker_{\ZZ_p}(A_p+\Qnd)$ contains $0$ and possibly $\spn(\bone)$. However, due to our assumption on $A$, i.e., that $A\cdot \bone \neq -d\cdot \bone$, the vector $\bone$ is not in $\ker_{\RR}(A+\Qnd)$. Hence, with probability $O\left(\frac{n^{1/2}\log^{3/2} n}{d}\right)$, we have $\ker_{\RR}(A+\Qnd)=\{0\}$ and the claim follows.
\end{proof}

\bibliographystyle{amsplain_yk}
\bibliography{Q_n-matrices}

\section{Appendix}

\begin{proof}[Combinatorial proof of Proposition~\ref{prop:LO-light}]
Here we provide a combinatorial (i.e.\ case by case) proof of~\eqref{eq:=b}. 

\noindent
\emph{Case 1: $|\supp(\bv)|=1$.} W.l.o.g.\ we assume that $\bv_1\neq 0$. 
As, in this case, $\bq^{\intercal}\bv = \bq_1 \bv_1 \in \{0,\bv_1\}$, we have that if $b \notin \{0,\bv_1\}$, then $\PP[\bq^{\intercal}\bv=b]=0$ holds trivialy. 
If $b = \bv_1$, then 
\[
\PP[\bq^{\intercal}\bv=b]=\PP[\bv_1\bq_1=b]=\PP[\bq_1=1]=\frac{\binom{n-1}{d-1}}{\binom{n}{d}}=\frac{d}{n}\le 1/2;
\]
and if $b=0$, then  
$$
\PP[\bq^{\intercal}\bv=b]=\PP[\bq_1=0]=\frac{\binom{n-1}{d}}{\binom{n}{d}}=\frac{n-d}{n}=1-\frac{d}{n}.
$$

\noindent
\emph{Case 2a: $|\supp(\bv)|\ge 2$ and $\bv\notin 1_{\supp(v)}\cdot \ZZ_p$.} W.l.o.g.\ we assume that $\bv_1, \bv_2\neq 0$; we further assume that $\bv_1\neq \bv_2$. Then, for $b\in\ZZ_p$ we have
\begin{align}
\PP[\bq^{\intercal}\bv\neq b]& = \Pr{\bq_1\bv_1+\bq_2\bv_2\neq b-\sum_{i=3}^{n} \bq_i\bv_i} \nonumber \\ 
& =\sum_{c\in\ZZ_p} \Pr{\bq_1\bv_1+\bq_2\bv_2\neq c \bigm\mid b-\sum_{i=3}^{n} \bq_i\bv_i=c} \Pr{b-\sum_{i=3}^{n} \bq_i\bv_i=c}.\label{eq:LO-light}
\end{align}

Let $c$ be given. If $c \notin\{0,\bv_1,\bv_2,\bv_1+\bv_2\}$, then $\Pr{\bq_1\bv_1+\bq_2\bv_2\neq c } = 1$. 

If $c \in\{0,\bv_1,\bv_2,\bv_1+\bv_2\}$, then we observe the following. 
\begin{itemize}
	\item If $c \in \{0,\bv_1+\bv_2\}$, then $c=\bq_1\bv_1+\bq_2\bv_2$ for 
	\emph{some} $q_1=q_2\in\{0,1\}$ which in turn implies that $c\neq\bq_1\bv_1+
	\bq_2\bv_2$ whenever $\bq_1 \neq \bq_2$ so that 
	\[
	\Pr{\bq_1\bv_1+\bq_2\bv_2\neq c \bigm\mid b-\sum_{i=3}^{n} \bq_i\bv_i=c}\ge 
	\Pr{\bq_1\neq \bq_2 \bigm\mid b-\sum_{i=3}^{n} \bq_i\bv_i=c}
	\]
	in particular holds. 
 	
	\item In the remaining case that $c \in \{\bv_1,\bv_2\}$, we may write that  
	\[
	\Pr{\bq_1\bv_1+\bq_2\bv_2\neq \bv_1 \bigm\mid b-\sum_{i=3}^{n} \bq_i\bv_i=\bv_1}= 
	\Pr{(\bq_1,\bq_2)\neq (1,0) \bigm\mid b-\sum_{i=3}^{n} \bq_i\bv_i=\bv_1}
    \]
    and 
	\[
    \Pr{\bq_1\bv_1+\bq_2\bv_2\neq \bv_2 \bigm\mid b-\sum_{i=3}^{n} \bq_i\bv_i=\bv_2}= 	   
    \Pr{(\bq_1,\bq_2)\neq (0,1) \bigm\mid b-\sum_{i=3}^{n} \bq_i\bv_i=\bv_2}.
	\]
\end{itemize}
The following lower bound on $\PP[\bq^{\intercal}\bv\neq b]$ now holds, owing to~\eqref{eq:LO-light},
\begin{align*}
\PP[\bq^{\intercal}\bv\neq b]\ge \sum_{c\in\{0,\bv_1+\bv_2\}}& \Pr{\bq_1\neq \bq_2 \bigm\mid b-\sum_{i=3}^{n} \bq_i\bv_i=c}\Pr{b-\sum_{i=3}^{n} \bq_i\bv_i=c}\\
& +\Pr{(\bq_1,\bq_2)\neq (1,0) \bigm\mid b-\sum_{i=3}^{n} \bq_i\bv_i=\bv_1}\Pr{b-\sum_{i=3}^{n} \bq_i\bv_i=\bv_1}\\
& +\Pr{(\bq_1,\bq_2)\neq (0,1) \bigm\mid b-\sum_{i=3}^{n} \bq_i\bv_i=\bv_2}\Pr{b-\sum_{i=3}^{n} \bq_i\bv_i=\bv_2}\\
& +\sum_{c\notin\{0,v_1,v_2,v_1+v_2\}} \Pr{b-\sum_{i=3}^{n} \bq_i\bv_i=c}
\end{align*}

Observe that 
$$
\Pr{(\bq_1,\bq_2)\neq (1,0) \bigm\mid b-\sum_{i=3}^{n} \bq_i\bv_i=\bv_1}=\Pr{(\bq_1,\bq_2)\neq (0,1) \bigm\mid b-\sum_{i=3}^{n} \bq_i\bv_i=\bv_1}.
$$ 
To see this, note that 
$$
\Pr{(\bq_1,\bq_2) =(0,1) \bigm\mid b-\sum_{i=3}^{n} \bq_i\bv_i=\bv_1}=\Pr{(\bq_1,\bq_2)= (1,0) \bigm\mid b-\sum_{i=3}^{n} \bq_i\bv_i=\bv_1}
$$ 
as well as
\[
\Pr{(\bq_1,\bq_2)\neq (1,0) \bigm\mid b-\sum_{i=3}^{n} \bq_i\bv_i=\bv_1}=\Pr{(\bq_1,\bq_2) =(0,1) \bigm\mid b-\sum_{i=3}^{n} \bq_i\bv_i=\bv_1}+\Pr{\bq_1=\bq_2 \bigm\mid b-\sum_{i=3}^{n} \bq_i\bv_i=\bv_1}
\]
and 
\[
\Pr{(\bq_1,\bq_2)\neq (0,1) \bigm\mid b-\sum_{i=3}^{n} \bq_i\bv_i=\bv_1}=\Pr{(\bq_1,\bq_2) =(1,0) \bigm\mid b-\sum_{i=3}^{n} \bq_i\bv_i=\bv_1}+\Pr{\bq_1=\bq_2 \bigm\mid b-\sum_{i=3}^{n} \bq_i\bv_i=\bv_1}.
\]

The following crude lower bound for $\PP[\bq^{\intercal}\bv\neq b]$ is then reached,
\begin{align*}
\PP[\bq^{\intercal}\bv\neq b]& \ge \sum_{c\in \ZZ_p} \Pr{(\bq_1,\bq_2)= (1,0) \bigm\mid b-\sum_{i=3}^{n} \bq_i\bv_i=c}\Pr{b-\sum_{i=3}^{n} \bq_i\bv_i=c}\\
&=\Pr{(\bq_1,\bq_2)= (1,0)}\\
&=\frac{\binom{n-2}{d-1}}{\binom{n}{d}}=\frac{d(n-d)}{n(n-1)}>\frac{d}{2n}.
\end{align*}

\noindent
\emph{Case 2b: $|\supp(\bv)|\ge 2$ and $\bv\in \bone_{\supp(\bv)}\cdot \ZZ_p$.} W.l.o.g.\ we further assume that $\bv_1, \bv_2\neq 0$ and that $\bv=\bone_S$ for some set $S\supseteq\{1,2\}$. 
Let $b\in\ZZ_p$ and set $s:=|S|$. Then, $\PP[\bq^{\intercal}\bv=b]=\Pr{\sum_{i\in S} \bq_i=b}$ so that the problem reduces to counting the number of sets $T\in\binom{[n]}{d}$ satisfying $|T\cap S|\equiv b\bmod p$. Writing $|T \cap S| = ip+b$, note that the inequalities $ip+b \leq d$ and $ip+b \leq s$ arise as 
$|T| =d$ and $|T \cap S| \leq s$, respectively. Hence, we may write 
\begin{equation}\label{eq:LO-bad}
\PP[\bq^{\intercal}\bv=b]=\Pr{\sum_{i\in S} \bq_i=b}=\frac{\sum_{i=0}^{\ell} \binom{s}{ip+b}\binom{n-s}{d-ip-b}}{\binom{n}{d}},
\end{equation}
where 
$
\ell=\min\left\{\lfloor (s-b)/p\rfloor, \lfloor(d-b)/p\rfloor\right\}
$
defines the upper bound that the parameter $i$ in $ip+b$ can attain. 

Surely, 
$$
\PP[\bq^{\intercal}\bv\neq b] \geq \PP[\bq^{\intercal}\bv =  b-1] + \PP[\bq^{\intercal}\bv =  b+1];
$$
hence we attain
\begin{equation}\label{eq:LO-good}
\PP[\bq^{\intercal}\bv\neq b] \geq \frac{\sum_{i=0}^{\ell} \left(\binom{s}{ip+b-1}\binom{n-s}{d-ip-b+1}+\binom{s}{ip+b+1}\binom{n-s}{d-ip-b-1}\right)}{\binom{n}{d}}.
\end{equation}

Let 
\[
a_i:=\binom{s}{ip+b-1}\binom{n-s}{d-ip-b+1}+\binom{s}{ip+b+1}\binom{n-s}{d-ip-b-1}
\]
denote the numerator of~\eqref{eq:LO-bad}; and let 
 \[
 c_i:= \binom{s}{ip+b}\binom{n-s}{d-ip-b}
 \]
denote the numerator of~\eqref{eq:LO-good}. In what follows we prove that $a_i \geq \frac{d}{n}c_i$ for every $i \in [\ell]$. This in turn yields that $\PP[\bq^{\intercal}\bv \neq b] \geq \frac{d}{n} \PP[\bq^{\intercal}\bv =b]$. The latter coupled with the triviality that $\PP[\bq^{\intercal}\bv \neq b] + \PP[\bq^{\intercal}\bv = b] = 1$ implies that $\PP[\bq^{\intercal}\bv \neq b] \geq \frac{d}{n+d} \geq \frac{d}{2n}$ so that~\eqref{eq:=b} follows completing the proof in this case.

\smallskip
It remains to prove that $a_i \geq \frac{d}{2n}c_i$ for every $i \in [\ell]$. 
We start by setting $t:=ip+b$ and rewriting the summand $a_i$ as
\begin{equation}\label{eq:LO-compare}
\binom{s}{t-1}\binom{n-s}{d-t+1}+\binom{s}{t+1}\binom{n-s}{d-t-1}=\binom{s}{t}\binom{n-s}{d-t}\left(\frac{t}{s-t+1}\cdot\frac{n-s-d+t}{d-t+1}+
\frac{s-t}{t+1}\cdot\frac{d-t}{n-s-d+t+1}\right).
\end{equation}
Owing to the assumption appearing in the premise that $\bv \notin \spn(\bone)$, we have $t\le s$, $t\le d$ and that $s<n$.

Dealing with the residual cases below, assume, first, that $s>t\ge 1$, $d>t$ and $n-s> d-t$ (thus: $n-d> s-t$). Then, subject to these assumptions we may write  
\begin{align*}
\frac{t}{s-t+1}\cdot\frac{n-s-d+t}{d-t+1}+
\frac{s-t}{t+1}\cdot\frac{d-t}{n-s-d+t+1}&\ge 
\frac{t}{2(s-t)}\cdot\frac{(n-d)-(s-t)}{2(d-t)}+
\frac{s-t}{2t}\cdot\frac{d-t}{2((n-d)-(s-t))}\\
& \ge \frac{1}{4}\left(\frac{t}{s-t}\cdot\frac{(n-d)-(s-t)}{d-t}+
\frac{s-t}{t}\cdot\frac{d-t}{(n-d)-(s-t)}\right)\\ 
& =
\frac{1}{4}\left(x+
\frac{1}{x}\right),
\end{align*}
where $x\colon=\frac{t}{s-t}\cdot\frac{(n-d)-(s-t)}{d-t}>0$. As $x+\frac{1}{x}\ge 2$ always holds for $x > 0$, we attain the following lower bound on~\eqref{eq:LO-compare},
\[
\binom{s}{t-1}\binom{n-s}{d-t+1}+\binom{s}{t+1}\binom{n-s}{d-t-1}\ge \frac{1}{2}\binom{s}{t}\binom{n-s}{d-t} = \frac{c_i}{2}.
\]

Next, we contend with the residual cases left to consider for our estimate of 
$$
\frac{t}{s-t+1}\cdot\frac{n-s-d+t}{d-t+1}+\frac{s-t}{t+1}\cdot\frac{d-t}{n-s-d+t+1},
$$
appearing on the r.h.s. of~\eqref{eq:LO-compare}, to be complete. As these `boundary' cases are somewhat more docile, so to speak, compared to our `primary' case, these all fit nicely within the following short list. 
\begin{itemize}

\item If $n-s\le d-t-1$, then $a_i=\binom{s}{t+1}\binom{n-s}{d-t-1}$, whereas $c_i=0$, thus $a_i\ge c_i$ holds.
\item If $n-s= d-t$ (so that $n-d=s-t$), $t<d$,  and $s>t$ (and recalling that $d\le n/2$), then $c_i=\binom{s}{t}$, whereas 
$$
a_i=\binom{s}{t+1}(n-s)= 
\binom{s}{t}\frac{(s-t)(n-s)}{t+1}= \binom{s}{t}\frac{(n-d)(n-s)}{t+1}\ge \binom{s}{t}\frac{(n-d)(n-s)}{d}\ge  \binom{s}{t}(n-s)\ge \binom{s}{t}.
$$

\item If $t=s$, then $c_i=\binom{n-s}{d-s}$ whereas 
$$
a_i=s\binom{n-s}{d-s+1}=s\frac{n-d}{d-s+1}\binom{n-s}{d-s}>s\binom{n-s}{d-s}\ge c_i.
$$

\item If $t=d$, then 
$$
c_i=\binom{s}{d}=\frac{s-d+1}{d}\binom{s}{d-1}< \frac{n}{d} \binom{s}{d-1}
$$ 
whereas 
$$
a_i=\binom{s}{d-1}(n-s)\ge \binom{s}{d-1}\ge \frac{d}{n} c_i.
$$

\item If $t=0$ and $s\le n-d$, then $c_i=\binom{n-s}{d}$ whereas 
$$
a_i=s\binom{n-s}{d-1}=s\frac{d}{n-d-s+1}\binom{n-s}{d}>\frac{2d}{n}\binom{n-s}{d}=\frac{2d}{n}c_i
$$ 
(since $s\ge 2$).

\item If $t=0$ and $s> n-d$, then $c_i=\binom{s}{0}\binom{n-s}{d}=0$, so that $a_i\ge c_i$.
\end{itemize}

Observe that throughout the cases considered, $\PP[\bq^{\intercal}\bv\neq b]\ge \frac{d}{2n}$ is always maintained so that~\eqref{eq:=b} holds with $c'=1/2$.
\end{proof}

\end{document}